%% file: manuscript.tex
\documentclass[
  a4paper,oneside,DIV=12,
  12pt,
  headsepline,
  ]{scrartcl}

\input{packages}
\input{layout}

\input{shortcuts}
\input{shortcuts_this}

\title{%
  Weighted Recursions for the \\ Smallest Parts Function
}
\author{%
  Matthew Ortiz%
}
\newcommand{\headertitle}{%
  Weighted Recursions for the Smallest Parts Function
}
\newcommand{\headerauthors}{%
  Matthew Ortiz
}
\ifdraft{\date{\today\ at\ \thistime}}{\date{}}

\begin{document}

\thispagestyle{scrplain}
\begingroup
\deffootnote[1em]{1.5em}{1em}{\thefootnotemark}
\maketitle
\endgroup

\begin{abstract}
\small
\noindent
{\tbf Abstract:}
We establish new polynomial-weighted recursions for Andrews' smallest parts function.
Our results use the generating series for the spt function, a harmonic Maass form of weight 3/2, paired with the Dedekind eta function via the Rankin-Cohen bracket.
In weights larger than 2, we find a nontrivial quasimodular component, which we determine for the relevant weights.
We apply the holomorphic projection operator and the vanishing of cusp form spaces of small enough weight to obtain our results.
  \\[.3\baselineskip]
  \noindent
  \textsf{\textbf{%
      Rankin--Cohen brackets%
    }}%
  \noindent
  \ {\tiny$\blacksquare$}\ %
  \textsf{\textbf{%
      Holomorphic projection%
    }}%
  \noindent
  \ {\tiny$\blacksquare$}\ %
  \textsf{\textbf{%
      Quasimodular forms%
    }}
  \\[.2\baselineskip]
  \noindent
  \textsf{\textbf{%
      MSC Primary: 11F37, 11F30
    }}%
\end{abstract}

\Needspace*{4em}
\phantomsection
\label{sec:introduction}
\addcontentsline{toc}{section}{Introduction}
\markright{Introduction}
First introduced by Andrews in~\cite{andrews-2008}, the smallest parts function, denoted~$\spt(n)$, counts the occurrence of the smallest part in every partition of~$n$.
For example, the partitions of 4 are
\begin{gather*}
  4=\ul{4}
  =3+\ul{1}
  =\ul{2}+\ul{2}
  =2+\ul{1}+\ul{1}
  =\ul{1}+\ul{1}+\ul{1}+\ul{1}
  \tx{,}
\end{gather*}
with the smallest parts underlined.
Hence~$\spt(4)=10$.

As with the classical partition function, number-theoretic properties of~$\spt$ have been extensively studied; see for example~\cite{garvan-2012}.
A notable result, recorded by Zagier~\cite{zagier-2009}, is the recursion
\begin{gather*}
  \sum_{m^2+3m\le 2N}
  \hspace{-12pt}
  (-1)^m
  \,
  \spt\big(
  N-
  \mfrac{m^2+3m}{2}
  \big)
  =
  a(N)
  \tx{,}
  \quad
  \tx{where}
  \quad
  a(N)
  :=
  -
  \sum_{\substack{ab=6N \\ a<b}}
  \big(
  \mfrac{12}{b^2-a^2}
  \big)
  \cdot a
  \tx{.}
\end{gather*}
This identity follows from the appearance of the functions in coefficients of a harmonic Maass form, combined with holomorphic projection.
A vector-valued analog of this idea is used to derive recursions for some of Ramanujan's mock theta functions in~\cite{imamoglu-raum-richter-2014}.

The aforementioned recursions are unweighted (or linearly weighted in the case of~\cite{imamoglu-raum-richter-2014}), and come from considering weight 2 modular forms.
By contrast, the present work examines modular forms of higher weight, yielding the following recursions with higher-degree polynomial weights.
To state these, define the divisor sum
\begin{gather}
  \label{eq:lambda_star}
  \tag{DS}
  \lambda_{k}^*
  (N)
  =
  \sum_{\substack{ab=6N \\ a\equiv 0\bmod{3} \\ b\not\equiv 0\bmod{3} \\ a<b}}
  (-1)^{a/3}
  a^{k}
  +
  \sum_{\substack{ab=6N \\ a\not\equiv 0\bmod{3} \\ b\equiv 0\bmod{3} \\ a<b}}
  (-1)^{b/3}
  a^{k}
  \tx{.}
\end{gather}

\begin{maintheorem}
  \label{thm:main_theorem}
  We have the recursions
  \begin{align*}
    \sum_{3m^2+m\le 2N}
    \hspace{-12pt}
    (-1)^m
    \;
    F_1(N,m)
    \;
    \spt\big(
    N-\mfrac{3m^2+m}{2}
    \big)
    &=
    -\lambda_3^*(N)
    -12N\sigma_1(N)
    +20\sigma_3(N)
    \tx{,}
    \\
    \sum_{3m^2+m\le 2N}
    \hspace{-12pt}
    (-1)^m
    \;
    F_2(N,m)
    \;
    \spt\big(
    N-\mfrac{3m^2+m}{2}
    \big)
    &=
    -\lambda_5^*(N)
    -72N^2\sigma_1(N)
    +60N\sigma_3(N)
    -42\sigma_5(N)
    \tx{,}
    \\
    \sum_{3m^2+m\le 2N}
    \hspace{-12pt}
    (-1)^m
    \;
    F_3(N,m)
    \;
    \spt\big(
    N-\mfrac{3m^2+m}{2}
    \big)
    &=
    -\lambda_7^*(N)
    -432N^3\sigma_1(N)
    +360N^2\sigma_3(N)
    \\
    &\quad
    -84N\sigma_5(N)
    +40\sigma_7(N)
    \tx{,}
    \\
    \sum_{3m^2+m\le 2N}
    \hspace{-12pt}
    (-1)^m
    \;
    F_4(N,m)
    \;
    \spt\big(
    N-\mfrac{3m^2+m}{2}
    \big)
    &=
    -\lambda_9^*(N)
    -2592N^4\sigma_1(N)
    +2160N^3\sigma_3(N)
    \\
    &\quad
    -504N^2\sigma_5(N)
    +60N\sigma_7(N)
    -22\sigma_9(N)
    \tx{,}
  \end{align*}
  where
  \begin{align*}
    F_1(N,m)
    &=
    36m^2+12m-6N+1
    \\
    F_2(N,m)
    &=
    1296m^4
    +
    864m^3
    -
    648Nm^2
    +
    216m^2
    -
    216Nm
    +
    24m
    +
    36N^2
    -
    18N
    +1
    \\
    F_3(N,m)
    &=
    46656m^6
    +
    46656m^5
    -
    38880Nm^4
    +
    19440m^4
    -
    25920Nm^3
    +
    4320m^3
    +
    7776N^2m^2
    \\
    &\quad
    -
    6480Nm^2
    +
    540m^2
    +
    2592N^2m
    -
    720Nm
    +
    36m
    -
    216N^3
    +
    216N^2
    -
    30N
    +1
    \\
    F_4(N,m)
    &=
    1679616m^8
    +
    2239488m^7
    -
    1959552Nm^6
    +
    1306368m^6
    -
    1959552Nm^5
    +
    435456m^5
    \\
    &\quad
    +
    699840N^2m^4
    -
    816480Nm^4
    +
    90720m^4
    +
    466560N^2m^3
    -
    181440Nm^3
    +
    12096m^3
    \\
    &\quad
    -
    77760N^3m^2
    +
    116640N^2m^2
    -
    22680Nm^2
    +
    1008m^2
    -
    25920N^3m
    +
    12960N^2m
    \\
    &\quad
    -
    1512Nm
    +
    48m
    +
    1296N^4
    -
    2160N^3
    +
    540N^2
    -
    42N
    +
    1
  \end{align*}
\end{maintheorem}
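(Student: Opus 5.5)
The plan is to realize each identity as the vanishing of a cusp form space. The starting point is the harmonic Maass form of weight $3/2$ attached to $\spt$ (Bringmann's completion). Its holomorphic part is $\sum_n (12\spt(n)+(24n-1)p(n))q^{n-1/24}$ minus a multiple of a suitable quasimodular correction, and its nonholomorphic part is a period integral of $\eta(\tau)$. We pair it with $\eta(\tau)=\sum_m (-1)^m q^{(6m+1)^2/24}$, which has weight $1/2$. The products are taken through the Rankin--Cohen brackets $[\,\cdot\,,\eta]_\nu$ for $\nu=1,2,3,4$, which gives weights $2+2\nu=4,6,8,10$.

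The exponents $n-1/24$ and $(6m+1)^2/24$ combine to an integer $N=n+(3m^2+m)/2$. The $\nu$-th bracket then has $q^N$ coefficient
\[
\sum_{m}(-1)^m P_\nu\big(N-\tfrac{3m^2+m}{2},\,\tfrac{(6m+1)^2}{24}\big)\,c(\cdot),
\]
where $P_\nu$ is the Rankin--Cohen polynomial $\sum_{r}(-1)^r\binom{\nu+1/2}{\nu-r}\binom{\nu-1/2}{r}a^rb^{\nu-r}$. After rescaling by $24^\nu$, this produces the polynomials $F_\nu(N,m)$. The $p(n)$ terms and the $\eta^{-1}$-type pieces should cancel or recombine into Eisenstein-type contributions once we use $\eta\cdot\eta^{-1}=1$ and the bracket identities.

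Next, we apply holomorphic projection $\pi_{\mathrm{hol}}$ to the bracket. The bracket of a harmonic Maass form with $\eta$ is a real-analytic modular form of weight $2\nu+2\ge 4$ on $\mathrm{SL}_2(\mathbb Z)$, the multipliers cancelling since $\eta^{-1}$ and $\eta$ carry inverse multipliers. Because the weight exceeds $2$, $\pi_{\mathrm{hol}}$ yields a holomorphic modular form. Its $q$-expansion is the holomorphic part of the bracket plus a correction coming from the nonholomorphic part. That correction is computed as in Zagier's weight-$2$ case: the nonholomorphic part is built from incomplete gamma functions in $\eta$-coefficients, and pairing with $\eta$-coefficients gives sums over $ab=6N$ with signs $(-1)^{a/3}$ or $(-1)^{b/3}$, according to which of $a,b$ is divisible by $3$. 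Applying Sturm's-type formula for $\pi_{\mathrm{hol}}$ with the $\nu$-th bracket weights, the resulting Beta-type integral collapses to a pure power $a^{2\nu+1}$. This gives the term $-\lambda^*_{2\nu+1}(N)$.

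\textbf{Quasimodular component.} For weight larger than $2$, the holomorphic part of the spt form carries an $E_2$-type correction; more precisely, $\sum(24n-1)p(n)q^{n-1/24}$ is essentially $E_2/\eta$. Bracketing this with $\eta$ produces quasimodular forms built from $E_2$, $E_4$ and $E_6$ and their derivatives, whose coefficients are the combinations of $N^j\sigma_{2\nu+1-2j}(N)$ seen in the statement. I would compute $[E_2\eta^{-1},\eta]_\nu$ explicitly using the Leibniz-type expansion and $D\eta=E_2\eta/24$. I expect this to be the main obstacle: identifying the quasimodular part exactly, and then subtracting it to leave a genuine modular form.

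After subtraction, what remains is a holomorphic modular form of weight $4,6,8,10$ on $\mathrm{SL}_2(\mathbb Z)$, and these spaces are one-dimensional, spanned by $E_k$, because $S_k=0$ for $k\le 10$. We fix the constant by comparing $q^0$ coefficients. Equating $q^N$ coefficients then gives each identity. As a final step, I would verify small $N$ numerically to confirm the normalizations, including the constants $20,42,40,22$, which are $-E_k$-coefficient normalizations.
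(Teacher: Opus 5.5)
Your route differs from the paper's at one point, and that point decides the outcome. The paper asserts $\pi^{\hol}_{2+2\nu}([F,\eta]_\nu)\in\rmS_{2+2\nu}=0$ in \eqref{eq:preliminaries:identity}. You use only $\rmM_{2+2\nu}=\CC E_{2+2\nu}$ and fix the multiple from the $q^0$ coefficient. Yours is the correct move. It has to be applied to the full projection, including the $E_2/\eta$ contribution, because after subtracting $[E_2/\eta,\eta]_\nu$ the remainder is only quasimodular.

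Done that way, the $q^0$ coefficient is \emph{not} zero. Indeed $\spt(0)=0$ and the nonholomorphic part decays at $i\infty$. But the $q^{-1/24}$ term of $-\frac{1}{12}E_2/\eta$ (your $a(0)=12\spt(0)-p(0)=-1$) pairs with the $q^{1/24}$ term of $\eta$. By Vandermonde the constant term of $[F,\eta]_\nu$ is then $-\frac{1}{12}\binom{2\nu}{\nu}24^{-\nu}$. Hence $\pi^{\hol}_{2+2\nu}([F,\eta]_\nu)=-\frac{1}{12}\binom{2\nu}{\nu}24^{-\nu}E_{2+2\nu}$, which is not cuspidal.

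Now $\binom{2\nu}{\nu}24^{-\nu}$ is exactly the $E_{2+2\nu}$-coefficient $\frac{1}{12},\frac{1}{96},\frac{5}{3456},\frac{35}{165888}$ in Proposition~\ref{prop:quasimodular_component}, so the two cancel. Carried out honestly, your argument yields the identities with $20\sigma_3(N)$, $-42\sigma_5(N)$, $40\sigma_7(N)$, $-22\sigma_9(N)$ deleted. Your final step, that equating $q^N$ coefficients gives each identity and confirms $20,42,40,22$, cannot succeed, because the statement as written is false:
\begin{itemize}
\item For $\nu=1$, $N=1$: the left side is $F_1(1,0)\spt(1)=-5$, since the $m=-1$ term carries $\spt(0)=0$. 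The stated right side is $-\lambda_3^*(1)-12+20=15$, using $\lambda_3^*(1)=1-8=-7$. Without $20\sigma_3$ one gets $-5$.
\item For $\nu=2$, $N=1$: the left side is $19$. The stated right side is $31-72+60-42=-23$, and the corrected one is $19$.
\end{itemize}
The small-$N$ check you planned at the end would have exposed this.

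Your sketch of the nonholomorphic term shares a second defect with \eqref{eq:lambda_star}. Put $r=|6m+1|$, $s=|6\tilde m+1|$, $a=(s-r)/2$, $b=(s+r)/2$. The pairs $(m,\tilde m)$ then correspond to $ab=6N$ with $a<b$, exactly one of $a,b$ divisible by $3$, \emph{and} $a\not\equiv b\bmod 2$, since $a+b=s$ is odd. Sorting only by divisibility by $3$ admits spurious terms with $a,b$ both even when $N$ is even. For $N=2$, \eqref{eq:lambda_star} gives $\lambda_3^*(2)=-27+1+8=-18$. The correct sum omits $(a,b)=(2,6)$ and equals $-26$, and indeed the left side for $\nu=1$, $N=2$ is $-46=26-12\cdot 2\cdot\sigma_1(2)$.

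Two smaller slips: the first argument of $P_\nu$ should be $N-\frac{3m^2+m}{2}-\frac{1}{24}$. The factor turning $f_\nu$ into $F_\nu$ is $6^\nu/\binom{\nu-1/2}{\nu}$ (so $12$ and $96$ for $\nu=1,2$), not $24^\nu$.
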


The proof of Theorem~\ref{thm:main_theorem} uses the framework of~\cite{mertens-2016,ortiz-raum-richter-2026}, which yields formulas for the coefficients of the Rankin--Cohen bracket of a mock modular form paired with a modular form.
This lets us leverage the vanishing of cusp forms for the full modular group~$\SL{2}(\ZZ)$ of small enough weight.
See~\eqref{eq:preliminaries:identity} for an effective summary of this idea.
The main issue to account for is the completion to a harmonic Maass form containing an extra quasimodular component, which we work out explicitly for small weights.
This is the second term on the left-hand side of~\eqref{eq:preliminaries:identity}.

This paper is structured as follows. In Section~\ref{sec:preliminaries} we recall the relevant ideas, including spaces of quasimodular forms.
We also state the completion of~$\spt$ to a harmonic Maass form.
We then apply the Rankin--Cohen bracket and holomorphic projection, computing the Fourier coefficients of the result in Section~\ref{sec:fourier}.
In Section~\ref{sec:proof_main}, we explain how these computations yield the formulas in Theorem~\ref{thm:main_theorem}.

\section{Preliminaries}
\label{sec:preliminaries}
\subsection{Modular forms and Harmonic Maass forms}
\label{ssec:preliminaries:modular_forms}
For background on modular and Maass forms, see~\cite{bringmann-folsom-ono-rolen-2018}.

We first set some notation.
Let~$\mathbb{H}$ denote the upper half-plane.
For functions periodic on~$\mathbb{H}$, we write~$c(f,n;y)$ for its~$n$th Fourier coefficient and~$e(n\tau):=e^{2\pi in\tau}$ the associated power, where~$n\in\QQ$.
When the coefficient is independent of~$y$, we simply write~$c(f,n)$.
Finally,~$\Ga$ denotes a congruence subgroup of the modular group~$\SL{2}(\ZZ)$ unless noted otherwise.

We denote the spaces of modular resp. Maass forms of weight~$k$, level~$\Ga$ and multiplier system~$v$ by~$\rmM_k(\Ga,v)$ resp.~$\bbM_k(\Ga,v)$.
When~$v$ is trivial, we write~$\rmM_k(\Ga)$ resp.~$\bbM_k(\Ga)$, and when in addition~$\Ga=\SL{2}(\ZZ)$, we write~$\rmM_k$ resp.~$\bbM_k$.
Spaces of cusp forms are likewise written~$\rmS_k(\Ga,v)$,~$\rmS_k(\Ga)$, or~$\rmS_k$.

\begin{remark}
  \label{rem:metaplectic}
Note that in~\cite{ortiz-raum-richter-2026}, the metaplectic cover is used to define modular and Maass forms of half-integral weight.
  In order to use results from that work, we identify a multiplier system~$v$ for integral weight modular forms over~$\Ga\le\SL{2}(\ZZ)$ with characters~$\chi$ of~$\SL{2}(\ZZ)$, which in turn are in bijection with vector-valued modular forms of type~$\Ind_\Ga^{\SL{2}(\ZZ)}\chi$.
\end{remark}

A well-known example is the Dedekind eta function
\begin{gather}
  \label{eq:dedekind_eta}
  \eta(\tau)
  =
  e\big(\mfrac{1}{24}\tau\big)
  \prod_{n=1}^\infty
  (1-e(n\tau))
  =
  \sum_{m\in\ZZ}
  (-1)^m
  e\big(
  \big(\mfrac{3m^2+m}{2}+\mfrac{1}{24}\big)
  \tau
  \big)
  \tx{,}
\end{gather}
which is modular of weight~$1/2$ on~$\SL{2}(\ZZ)$ and a multiplier system~$v_\eta$ that can be written down explicitly, cf.~\cite{koehler-2011}, Section 1.3.

\paragraph{The Eichler integral.}
For~$f\in\rmM_{2-k}(\Ga,v)$, the Eichler integral of~$f$, denoted~$f^*$, is
\begin{gather}
  \label{eq:def:non-holomorphic-eichler-integral}
  \begin{aligned}
  f^\ast(\tau)
   & :=
  -(2 i)^{k-1}
  \int_{-\ov{\tau}}^{i\infty}
  \frac{\ov{f(-\ov{w})}}{(w + \tau)^k}
  \,\rmd\mspace{-2mu}w
  \\
   & \hphantom{:}=
  \frac{\ov{c(f,0)}}{1-k}\, y^{1-k}
  \,-\,
  (4 \pi)^{k-1}
  \sum_{n < 0}
  \ov{c(f, |n|)\, }|n|^{k-1}\,
  \Gamma(1-k,4 \pi |n| y) e(n\tau)
  \tx{,}
  \end{aligned}
\end{gather}
where~$\Ga$ in this case is the incomplete Gamma function.
\subsection{Quasimodular forms}
\label{ssec:preliminaries:quasimodular_forms}
Quasimodular forms are a generalization of modular forms that weakens modularity while maintaining holomorphicity (\cite{zagier-1994, kaneko-zagier-1995}).
\begin{definition}
  Let~$k$ and~$s$ be nonnegative integers.
  A holomorphic function~$f \defcol \HS \ra \CC$ is a~\emph{quasimodular form} of weight~$k$ and depth~$s$ (over~$\SL{2}(\ZZ)$) if there are holomorphic functions~$f_0,\ldots,f_s$ on~$\HS$ such that
  \begin{gather*}
    f(\ga\tau)
    =
    (c\tau+d)^k
    \sum_{i=0}^s
    \big(
    \mfrac{c}{c\tau+d}
    \big)^i
    f_i(\tau)
    \tx{.}
  \end{gather*}
\end{definition}

  The set of quasimodular forms of weight~$k$ and depth at most~$s$ is a vector space, denoted~$\wtd{\rmM}_k^{\le s}$.
  A notable example is the holomorphic Eisenstein series~$E_2\in\wtd{\rmM}_2^1$.

  Multiplication and the~$\mu$th normalized derivative~$f^{(\mu)}=(2\pi i)^{-\mu}\,\partial_\tau^\mu f$ yield new quasimodular forms:
  \begin{proposition}
    \label{prop:quasimodular_multiplication}
      If~$f\in\wtd{\rmM}_k^{\le s}$ and~$g\in\wtd{\rmM}_l^{\le t}$, then~$fg\in\wtd{\rmM}_{k+l}^{\le s+t}$.
  \end{proposition}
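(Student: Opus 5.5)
The plan is to multiply the transformation laws and collect terms by powers of $c/(c\tau+d)$. Fix $\gamma=\begin{pmatrix} a&b\\ c&d\end{pmatrix}\in\SL{2}(\ZZ)$. By definition there are holomorphic functions $f_0,\ldots,f_s$ and $g_0,\ldots,g_t$ on $\HS$, independent of $\gamma$, such that
\begin{gather*}
  f(\gamma\tau)=(c\tau+d)^k\sum_{i=0}^s\big(\mfrac{c}{c\tau+d}\big)^i f_i(\tau)
  \quad\tx{and}\quad
  g(\gamma\tau)=(c\tau+d)^l\sum_{j=0}^t\big(\mfrac{c}{c\tau+d}\big)^j g_j(\tau)
  \tx{.}
\end{gather*}

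First, $fg$ is holomorphic on $\HS$, since $f$ and $g$ are. Next, multiply the two identities and use the Cauchy product:
\begin{gather*}
  (fg)(\gamma\tau)
  =
  (c\tau+d)^{k+l}
  \sum_{r=0}^{s+t}
  \big(\mfrac{c}{c\tau+d}\big)^r
  h_r(\tau)
  \tx{,}\qquad
  h_r:=\sum_{\substack{i+j=r\\ 0\le i\le s,\ 0\le j\le t}} f_i\,g_j
  \tx{.}
\end{gather*}
Each $h_r$ is a finite sum of products of holomorphic functions, so it is holomorphic on $\HS$. The $h_r$ depend only on the $f_i$ and $g_j$, not on $\gamma$, which is exactly what the definition requires.

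It follows that $fg$ satisfies the defining transformation law of weight $k+l$ with depth at most $s+t$, so $fg\in\wtd{\rmM}_{k+l}^{\le s+t}$. If the paper's convention also imposes a growth condition at the cusp (moderate growth, or holomorphy at $i\infty$ of the $f_i$), that condition is preserved too: products of functions of moderate growth, or of functions with holomorphic $q$-expansions, again have that property.

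The argument has no real obstacle. The only points to check are that the component functions $h_r$ do not depend on $\gamma$, and that the top index $s+t$ gives the depth bound "at most $s+t$" and not an exact depth, since $h_{s+t}=f_sg_t$ may vanish.
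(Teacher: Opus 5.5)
Your argument is correct. Multiplying the two transformation laws and collecting powers of $c/(c\tau+d)$ gives holomorphic components $h_r=\sum_{i+j=r}f_ig_j$ for $0\le r\le s+t$ that do not depend on $\gamma$, which is exactly what the definition requires. The paper states this proposition without proof, treating it as immediate from the definition; your Cauchy-product check, together with your remark that any growth condition at the cusp is preserved, is the standard argument the paper is relying on.
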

  \begin{proposition}
    \label{prop:quasimodular_derivative}
    If~$f\in\wtd{\rmM}_k^{\le s}$, then~$f^{(\mu)}\in\wtd{\rmM}_{k+2\mu}^{\le s+\mu}$.
  \end{proposition}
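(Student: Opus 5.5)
The statement to prove is Proposition~\ref{prop:quasimodular_derivative}: if~$f\in\wtd{\rmM}_k^{\le s}$, then~$f^{(\mu)}\in\wtd{\rmM}_{k+2\mu}^{\le s+\mu}$. I will induct on~$\mu$. The case~$\mu=0$ is trivial. Since~$f^{(\mu+1)}=(f^{(\mu)})^{(1)}$, it suffices to treat~$\mu=1$ for an arbitrary quasimodular form, that is, to show~$f'=(2\pi i)^{-1}\partial_\tau f\in\wtd{\rmM}_{k+2}^{\le s+1}$.

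For this I differentiate the defining transformation law
\begin{gather*}
  f(\ga\tau)=(c\tau+d)^k\sum_{i=0}^s\big(\mfrac{c}{c\tau+d}\big)^i f_i(\tau)
\end{gather*}
with respect to~$\tau$. On the left, the chain rule together with~$\partial_\tau(\ga\tau)=(c\tau+d)^{-2}$ (valid since~$\det\ga=1$) gives~$(c\tau+d)^{-2}(\partial f)(\ga\tau)$. On the right, I rewrite each term as~$c^i(c\tau+d)^{k-i}f_i(\tau)$. Its derivative is
\begin{gather*}
  c^i(c\tau+d)^{k-i}\partial f_i(\tau)+(k-i)\,c^{i+1}(c\tau+d)^{k-i-1}f_i(\tau).
\end{gather*}
Multiplying through by~$(c\tau+d)^2$ and dividing by~$2\pi i$ then yields
\begin{gather*}
  f'(\ga\tau)=(c\tau+d)^{k+2}\sum_{j=0}^{s+1}\big(\mfrac{c}{c\tau+d}\big)^j g_j(\tau),
  \qquad
  g_j=f_j'+\mfrac{k-j+1}{2\pi i}\,f_{j-1},
\end{gather*}
with the convention~$f_{-1}=f_{s+1}=0$.

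Each~$g_j$ is holomorphic, because each~$f_i$ is. So~$f'$ has weight~$k+2$ and depth at most~$s+1$. The induction step is the same computation applied to~$f^{(\mu)}$, which has weight~$k+2\mu$ and depth at most~$s+\mu$ by the induction hypothesis; the weight increases by~$2$ and the depth by at most~$1$ at each step. Strictly speaking, the~$f_i$ are not assumed quasimodular themselves, so the argument needs nothing beyond their holomorphy.

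There is no real obstacle here. The only points needing care are the chain-rule factor~$(c\tau+d)^{-2}$ from~$\det\ga=1$ and the index shift that produces the new top term~$g_{s+1}=\mfrac{k-s}{2\pi i}f_s$. That term may vanish, which is why the conclusion is depth~$\le s+\mu$ rather than exactly~$s+\mu$.
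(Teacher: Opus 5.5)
Your proof is correct and follows the paper's route: the paper reduces to $\mu=1$ and iterates, citing Proposition 6 of Martin--Royer for that single step, while you prove the step yourself by differentiating the transformation law to get $g_j=f_j'+\frac{k-j+1}{2\pi i}f_{j-1}$. Your computation checks out, including the chain-rule factor $(c\tau+d)^{-2}$ and the new top coefficient $\frac{k-s}{2\pi i}f_s$.
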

  \begin{proof}
    This is Proposition 6 of~\cite{martin-royer-2009} applied~$\mu$ times.
  \end{proof}
  The next results show that quasimodular forms can be expressed in terms of derivatives of modular forms.
  \begin{proposition}[\cite{martin-royer-2009}, Proposition 8]
    \label{prop:quasimodular_structure}
    We have the equality
    \begin{gather*}
      \wtd{\rmM}_k^{\le k/2}
      =
      \bigoplus_{\mu=0}^{k/2}
      \rmM_{k-2\mu}^{(\mu)}
      \oplus
      \CC
      E_2^{(k/2-1)}
      \tx{,}
    \end{gather*}
    where~${\rmM}_{k-2\mu}^{(\mu)}=\{f^{(i)}:f\in \rmM_{k-2\mu}\}$. In particular,
  \begin{align*}
    \wtd{\rmM}_4^{\le 2}
    &=
    \CC E_2^{(1)}
    \oplus
    \CC E_4
    \tx{,}
    \\
    \wtd{\rmM}_6^{\le 3}
    &=
    \CC E_2^{(2)}
    \oplus
    \CC E_4^{(1)}
    \oplus
    \CC E_6
    \tx{,}
    \\
    \wtd{\rmM}_8^{\le 4}
    &=
    \CC E_2^{(3)}
    \oplus
    \CC E_4^{(2)}
    \oplus
    \CC E_6^{(1)}
    \oplus
    \CC E_8
    \tx{,}
    \\
    \wtd{\rmM}_{10}^{\le 5}
    &=
    \CC E_2^{(4)}
    \oplus
    \CC E_4^{(3)}
    \oplus
    \CC E_6^{(2)}
    \oplus
    \CC E_8^{(1)}
    \oplus
    \CC E_{10}
    \tx{.}
  \end{align*}
  \end{proposition}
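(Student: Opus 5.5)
The statement to prove is Proposition~\ref{prop:quasimodular_structure}: the general decomposition is quoted from~\cite{martin-royer-2009}, so what has to be checked are the four listed cases $k=4,6,8,10$. The plan is to specialize the decomposition $\wtd{\rmM}_k^{\le k/2}=\bigoplus_{\mu}\rmM_{k-2\mu}^{(\mu)}\oplus\CC E_2^{(k/2-1)}$ using the known dimensions of $\rmM_j$ for the full modular group in small weights. The relevant facts are:
\begin{itemize}
\item $\rmM_0=\CC$;
\item $\rmM_2=0$;
\item $\rmM_j=\CC E_j$ for $j=4,6,8,10$.
\end{itemize}
The last holds because $\dim\rmS_j=0$ for $j<12$, so these spaces are spanned by Eisenstein series.

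For a fixed $k\in\{4,6,8,10\}$, I would run through $\mu=0,\ldots,k/2$ and discard the trivial summands.
\begin{itemize}
\item The summand $\mu=k/2$ is $\rmM_0^{(k/2)}=\{c^{(k/2)}:c\in\CC\}=0$, since derivatives of constants vanish (here $k/2\ge1$).
\item The summand $\mu=k/2-1$ is $\rmM_2^{(k/2-1)}=0$, since $\rmM_2=0$. This is exactly why $E_2^{(k/2-1)}$ has to be added by hand as the replacement.
\item Every remaining $\mu$ has $k-2\mu\in\{4,\ldots,k\}$, so it contributes the line $\CC E_{k-2\mu}^{(\mu)}$.
\end{itemize}
For instance, $k=10$ gives $E_{10},E_8^{(1)},E_6^{(2)},E_4^{(3)}$, together with $E_2^{(4)}$. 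The other three cases follow in the same way.

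The only point needing any care is that each summand is a line, not zero. This holds because $\partial_\tau^\mu$ is injective on nonconstant $q$-series: $E_j$ has nonzero $q^1$ coefficient, so $E_j^{(\mu)}\neq0$. Directness of the sum is already part of the cited proposition. If one wanted an independent check, compare depths: by Proposition~\ref{prop:quasimodular_derivative}, $E_j^{(\mu)}$ has depth exactly $\mu$ for $j\ge4$, while $E_2^{(k/2-1)}$ has depth $k/2$. Hence the generators have pairwise distinct depths and are linearly independent. There is no real obstacle here; the step most worth stating explicitly is the vanishing of $\rmM_2$ and of $\rmS_j$ for $j\le10$.
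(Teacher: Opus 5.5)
Your proposal is correct and follows the paper's approach: the paper cites Martin--Royer's Proposition~8 and reads off the four cases from $\rmM_0=\CC$, $\rmM_2=0$ and $\rmM_j=\CC E_j$ for $4\le j\le 10$, exactly as you do. One small caveat on your optional independence check: Proposition~\ref{prop:quasimodular_derivative} only bounds the depth from above, so ``depth exactly $\mu$'' needs one more fact, namely that differentiating a form of weight $k$ and depth $s$ multiplies its top component by $(k-s)/(2\pi i)$, which is nonzero for $E_j$ with $j\ge 4$ and for $E_2$ at every stage.
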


\begin{remark}
  Recall that~$\Delta\in \rmM_{12}$, where~$\Delta$ is the modular discriminant, and hence~$\Delta^{(1)}\in \wtd{\rmM}_{14}^{\le 7}$.
  This is the reason Theorem~\ref{thm:main_theorem} has no formula corresponding to~$\nu=6$, despite~$S_{14}=0$.
\end{remark}
\subsection{Rankin--Cohen brackets}
\label{ssec:preliminaries:rankin-cohen}
From two real-analytic modular-invariant functions, one can create a new modular-invariant function using the Rankin--Cohen bracket (\cite{rankin-1956, cohen-1975}).
\begin{definition}
\label{def:rankin-cohen}
Let~$f,g \defcol \HS \rightarrow \CC$ be real-analytic and~$k,l\in\QQ$. The \emph{$\nu$\thdash{} Rankin--Cohen bracket} of~$f$ and~$g$ is defined by
\begin{gather}
  \label{eq:def:rankin-cohen}
  \big[ f,g \big]_\nu
  \defeq
  \sum_{\mu=0}^\nu(-1)^\mu
  \mbinom{k+\nu-1}{\nu-\mu}
  \mbinom{l+\nu-1}{\mu}\;\;
  f^{(\mu)}\,
  g^{(\nu-\mu)}
  \tx{.}
\end{gather}
  This generalizes multiplication of modular forms, in the sense that~$[f,g]_0=fg$.

  Cohen (\cite{cohen-1975}, Theorem 7.1) showed that if~$f$ and~$g$ are modular of weights~$k$ and~$l$, then~$[f,g]_\nu$ is modular of weight~$k+l+2\nu$.
  In particular, if~$f\in\rmM_k(\Ga,v_1)$ and~$g\in\rmM_l(\Ga,v_2)$, then
  \begin{gather}
    \label{eq:rankin-cohen_modular_form}
    \big[f,g\big]_\nu
    \in
    \rmM_{k+l+2\nu}
    (\Ga,v_1v_2)
    \tx{.}
  \end{gather}
\end{definition}

\subsection{Holomorphic projection}
\label{ssec:preliminaries:projection}
We recall the holomorphic projection operator; see also~\cite{gross-zagier-1986, sturm-1980}.
\begin{definition}
  \label{def:holomorphic_projection}
  Let~$f\defcol \HS \ra \CC$ be continuous with Fourier expansion
  \begin{gather*}
    f(\tau)
    =
    \sum_{m\in\QQ}
    c(f,m;y)
    e(m\tau)
    \tx{,}
  \end{gather*}
  which also satisfies the following two conditions:
  \begin{enumerateroman}
    \item
      \label{it:def:holomorphic_projection:constant}
      $f(\tau) = c_0 + \cO(y^{-\epsilon})$ as~$y \rightarrow \infty$ for some~$\epsilon > 0$ and~$c_0 \in \CC$, and
    \item
      \label{it:def:holomorphic_projection:growth}
      for all~$m > 0$, $c(f, m; y) = \cO(y^{1-k+\epsilon})$ as~$y \rightarrow 0$ for some~$\epsilon > 0$.
  \end{enumerateroman}
  The holomorphic projection of~$f$ is
  \begin{gather}
    \label{eq:def:holomorphic_projection}
    \pi^\hol_{k}(f)
    \defeq
    c_0 + \sum_{0 < m \in \frac{1}{N}\ZZ}
    c(m)\, e(m \tau)
    \quad\tx{with}\quad
    c(m)
    \defeq
    \mfrac{(4\pi m)^{k-1}}{\Gamma(k-1)}\,
    \lim_{s \ra 0}\,
    \int\limits_0^\infty c(f; m; y)\, e^{-4\pi m y} y^{s+k-2} \,\intrmd y
    \tx{,}
  \end{gather}
  where~$\Ga$ denotes the Gamma function.
\end{definition}
The assumptions~\ref{it:def:holomorphic_projection:constant} and~\ref{it:def:holomorphic_projection:growth} yield convergence of the integral in~\eqref{eq:def:holomorphic_projection} at~$\infty$ and 0, respectively.
One may also show from~\eqref{eq:def:holomorphic_projection} that if~$f$ is already holomorphic, so that~$c(f,m;y)=c(f,m)$, then~$\pi^\hol_k(f)=f$; see Proposition 4 of~\cite{imamoglu-raum-richter-2014}.

Definition~\ref{def:holomorphic_projection} allows for a generalization of~\eqref{eq:rankin-cohen_modular_form}: let~$f$ and~$g$ be modular or Maass forms, with~$k$ and~$l$ now strict half-integers so that~$k+l\in\ZZ$.
From the discussion of Remark~\ref{rem:metaplectic}, Lemma 2.14 of~\cite{ortiz-raum-richter-2026} shows that for~$f$ and~$g$ as in Section~\ref{ssec:preliminaries:rankin-cohen},
\begin{gather}
  \label{eq:rankin-cohen_cusp}
  \pi^\hol_{k+l+2\nu}
  \big(
  \big[
    f,g
    \big]_\nu
  \big)
  \in
  \rmM_{k+l+2\nu}
  \big(
  \Ga,
  v_1v_2
  \big)
  \tx{;}
  \quad
  \pi^\hol_{k+l+2\nu}
  \big(
  \big[
    f,g
    \big]_\nu
  \big)
  \in
  \rmS_{k+l+2\nu}
  \big(
  \Ga,
  v_1v_2
  \big)
  \quad
  \tx{for~$\nu\ge 1$.}
\end{gather}
\subsection{spt generating series}
\label{ssec:preliminaries:spt}
It is known (\cite{ahlgren-andersen-2015}) that
\begin{gather}
  \label{eq:preliminaries:spt_maass}
  F(\tau)
  :=
  F^+(\tau)
  -
  \mfrac{1}{12}
  \mfrac{E_2(\tau)}{\eta(\tau)}
  -
  \mfrac{1}{\pi}
  \sqrt{\mfrac{3}{8}}
  \eta^*(\tau)
  \in
  \bbM_{3/2}(\SL{2}(\ZZ),\ov{v_\eta})
  \tx{,}
\end{gather}
where
\begin{gather*}
  F^+(\tau)
  =
  \sum_{n=0}^\infty
  \spt(n)
  e\big(
  \big(
  n-\mfrac{1}{24}
  \big)
  \tau
  \big)
  \tx{.}
\end{gather*}
Thus, from~\eqref{eq:rankin-cohen_cusp} and~\eqref{eq:preliminaries:spt_maass},
\begin{gather}
  \label{eq:preliminaries:identity}
  \pi_{2+2\nu}^\hol
  \big(
  [F,\eta]_\nu)
  =
  [F^+,\eta]_\nu
  -
  \mfrac{1}{12}
  [E_2/\eta,\eta]_\nu
  -
  \mfrac{1}{\pi}
  \sqrt{\mfrac{3}{8}}
  \pi_{2+2\nu}^\hol
  \big(
  [\eta^*,\eta]_\nu
  \big)
  \in
  \rmS_{2+2\nu}
  =0
\end{gather}
for~$\nu\in\{1,2,3,4,6\}$.
The recursions will follow from calculating the Fourier coefficients of the three summands explicitly.
\section{Computation of Fourier coefficients}
\label{sec:fourier}
\subsection{The spt component}
\label{ssec:fourier:spt}
The first summand of~\eqref{eq:preliminaries:identity} yields the weighted sum involving~$\spt$:
\begin{proposition}
  \label{prop:spt_component}
  We have
  \begin{gather*}
    c([F^+,\eta]_\nu,N)
    =
    \sum_{3m^2+m\le 2N}
    (-1)^m
    \;
    f_\nu(N,m)
    \;
    \spt
    \big(
    N-
    \mfrac{3m^2+m}{2}
    \big)
    \tx{,}
  \end{gather*}
  where
  \begin{gather*}
    f_\nu(N,m)
    =
    \sum_{\mu=0}^\nu
    \;
    (-1)^\mu
    \;
    \mbinom{\nu+\frac{1}{2}}{\nu-\mu}
    \mbinom{\nu-\frac{1}{2}}{\mu}
    \;
    \big(
    N-
    \mfrac{3m^2+m}{2}
    -
    \mfrac{1}{24}
    \big)^\mu
    \;
    \big(
    \mfrac{3m^2+m}{2}
    +
    \mfrac{1}{24}
    \big)^{\nu-\mu}
    \tx{.}
  \end{gather*}
\end{proposition}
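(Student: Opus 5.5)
This is a direct computation from the definition~\eqref{eq:def:rankin-cohen}. We take $f=F^+$ of weight $k=3/2$ and $g=\eta$ of weight $l=1/2$. The binomial coefficients are then $\binom{k+\nu-1}{\nu-\mu}=\binom{\nu+\frac12}{\nu-\mu}$ and $\binom{l+\nu-1}{\mu}=\binom{\nu-\frac12}{\mu}$, which matches the coefficients in $f_\nu$. Here the weight $3/2$ is the one attached to $F$ in~\eqref{eq:preliminaries:spt_maass}, and we use it for $F^+$ as well, since the bracket in~\eqref{eq:preliminaries:identity} is taken termwise.

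First, note that the normalized derivative acts on exponentials by $(e(n\tau))^{(\mu)}=n^\mu e(n\tau)$. Applied to the expansions of $F^+$ and of~\eqref{eq:dedekind_eta}, this gives
\begin{gather*}
  (F^+)^{(\mu)}(\tau)=\sum_{n\ge0}\spt(n)\big(n-\mfrac{1}{24}\big)^\mu e\big(\big(n-\mfrac{1}{24}\big)\tau\big),
  \\
  \eta^{(\nu-\mu)}(\tau)=\sum_{m\in\ZZ}(-1)^m\big(\mfrac{3m^2+m}{2}+\mfrac{1}{24}\big)^{\nu-\mu}e\big(\big(\mfrac{3m^2+m}{2}+\mfrac{1}{24}\big)\tau\big).
\end{gather*}
Both series converge absolutely and locally uniformly on $\HS$: $\spt(n)$ grows subexponentially, and $\eta$ is a theta series. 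We may therefore multiply them termwise.

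Next, the exponents in the product are $n-\frac1{24}+\frac{3m^2+m}{2}+\frac1{24}=n+\frac{3m^2+m}{2}$. The $\pm\frac1{24}$ shifts cancel, so the product is a series in integral powers $e(N\tau)$. To extract the coefficient of $e(N\tau)$ we set $n=N-\frac{3m^2+m}{2}$. The condition $n\ge0$ becomes $3m^2+m\le 2N$, which is a finite range of $m$, and $3m^2+m$ is always even, so $n$ is an integer. Substituting, the factor $(n-\frac1{24})^\mu$ becomes $(N-\frac{3m^2+m}{2}-\frac1{24})^\mu$.

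Finally, we sum over $\mu$ with the Rankin--Cohen weights $(-1)^\mu\binom{\nu+\frac12}{\nu-\mu}\binom{\nu-\frac12}{\mu}$ and interchange the finite sum over $\mu$ with the finite sum over $m$. This collects, for each $m$, exactly $(-1)^m f_\nu(N,m)\,\spt(N-\frac{3m^2+m}{2})$, which is the claimed formula.

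There is no real obstacle here. The only points that need care are checking the weight assignment, so that the binomials come out as stated, and noting that the $\pm\frac1{24}$ exponent shifts cancel.
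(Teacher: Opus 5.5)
Your proposal is correct and follows essentially the same route as the paper. You expand $F^+$ and $\eta$ and apply the normalized derivatives termwise with the binomials for weights $3/2$ and $1/2$. Then you extract the $N$th coefficient by setting $n=N-\frac{3m^2+m}{2}$ and interchanging the finite sums over $\mu$ and $m$.
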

\begin{proof}
  Using the Fourier expansion of~$\eta$ given in~\eqref{eq:dedekind_eta} and Definition~\ref{def:rankin-cohen} yields
  \begin{gather*}
    c([F^+,\eta]_\nu,N)
    =
    \sum_{\mu=0}^\nu
    (-1)^\mu
    \mbinom{\nu+\frac{1}{2}}{\nu-\mu}
    \mbinom{\nu-\frac{1}{2}}{\mu}
    \sum_{\substack{n+\frac{3m^2+m}{2}=N \\ n\in\ZZ_{\ge 0}, m \in\ZZ}}
    \hspace{-12pt}
    \big(n-\mfrac{1}{24}\big)^\mu
    \;
    (-1)^m
    \;
    \big(\mfrac{3m^2+m}{2}+\mfrac{1}{24}\big)^{\nu-\mu}
    \spt(n)
    \tx{,}
  \end{gather*}
  and the result follows from switching the sums and setting~$n=N-\frac{3m^2+m}{2}$.
\end{proof}

\subsection{The quasimodular component}
\label{ssec:fourier:quasimodular}
The second summand of~\eqref{eq:preliminaries:identity} is indeed a quasimodular form as a result of the following fact.
\begin{lemma}
  \label{la:quasimodular_quotient}
  $Q_k=\eta^{(k)}/\eta$ is a quasimodular form of weight~$2k$ and depth~$k$.
\end{lemma}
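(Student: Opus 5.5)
We argue by induction on $k$, using the logarithmic derivative of $\eta$. The base case is $k=1$. Taking the normalized logarithmic derivative of the product expansion \eqref{eq:dedekind_eta} gives
\begin{gather*}
  Q_1
  =
  \frac{\eta^{(1)}}{\eta}
  =
  \mfrac{1}{24}
  -
  \sum_{n\ge 1}\frac{n\,e(n\tau)}{1-e(n\tau)}
  =
  \mfrac{1}{24}E_2
  \tx{,}
\end{gather*}
which lies in $\wtd{\rmM}_2^{\le 1}$ and has depth exactly $1$. Equivalently, we can differentiate the transformation law $\eta(\ga\tau)=v_\eta(\ga)(c\tau+d)^{1/2}\eta(\tau)$. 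This yields
\begin{gather*}
  Q_1(\ga\tau)=(c\tau+d)^2Q_1(\tau)+\mfrac{c(c\tau+d)}{4\pi i}\tx{,}
\end{gather*}
in which the multiplier cancels. That is the quasimodular law of weight $2$ and depth $1$ with $f_1$ constant.

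For the inductive step, differentiate $\eta^{(k)}=Q_k\eta$ once with the normalized derivative. This gives the recursion
\begin{gather*}
  Q_{k+1}
  =
  \frac{(Q_k\eta)^{(1)}}{\eta}
  =
  Q_k^{(1)}+Q_kQ_1
  \tx{.}
\end{gather*}
By Proposition~\ref{prop:quasimodular_derivative}, the term $Q_k^{(1)}$ lies in $\wtd{\rmM}_{2k+2}^{\le k+1}$. By Proposition~\ref{prop:quasimodular_multiplication}, the term $Q_kQ_1$ lies in $\wtd{\rmM}_{2k+2}^{\le k+1}$. Hence $Q_{k+1}\in\wtd{\rmM}_{2k+2}^{\le k+1}$, which gives the weight and the upper bound on depth. (Throughout, $Q_0=1$ is also consistent with this recursion.)

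To show the depth is exactly $k$, we track the top component $f_k$ in the transformation law, that is, the coefficient of $\big(\frac{c}{c\tau+d}\big)^k$. Write $Q_k$ as a polynomial in $E_2$ with modular coefficients; by Proposition~\ref{prop:quasimodular_structure}-type structure theory, the depth equals the $E_2$-degree. Since $E_2^{(1)}=\frac{1}{12}(E_2^2-E_4)$ and derivatives of modular forms raise the $E_2$-degree by at most one, the recursion above shows that the coefficient of $E_2^{k}$ in $Q_k$ is $24^{-k}$. Indeed, $Q_kQ_1$ contributes $24^{-k}$, and $Q_k^{(1)}$ contributes $\frac{k}{12}24^{-k}\cdot 24$ times lower order... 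Rather than work in the $E_2$-expansion, the cleaner check is to argue with constants. The top component of $Q_k$ is a nonzero constant $a_k$, and the recursion gives $a_{k+1}=a_k\big(\tfrac{k}{2\pi i}\cdot\text{const}+\tfrac{1}{4\pi i}\big)$, where all contributions carry the same sign. Therefore $a_{k+1}\ne 0$. Concretely, one finds $a_k=(4\pi i)^{-k}\prod_{j<k}(2j+1)$, which is never zero.

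The main obstacle is this nonvanishing of the top component, since the upper bound on depth is immediate from the two propositions. I would settle it by computing directly how the top component transforms under differentiation. If $f$ has depth $s$ with constant top term $a$, then differentiating the transformation law shows that $f^{(1)}$ has top coefficient $\frac{s}{2\pi i}\cdot\frac{1}{?}a$ with a definite sign. The point to verify is that the contributions from $Q_k^{(1)}$ and from $Q_kQ_1$ add rather than cancel.
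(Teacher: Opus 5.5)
Your induction (base case $Q_1=\frac{1}{24}E_2$, the recursion $Q_{k+1}=Q_k^{(1)}+Q_kQ_1$, and the multiplication and derivative propositions) is exactly the paper's proof. Like yours, the paper's argument establishes only $Q_k\in\wtd{\rmM}_{2k}^{\le k}$, which is all that is used later.

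Your extra exact-depth paragraph goes beyond the paper and is unfinished as written (the placeholder $\frac{1}{?}$). It also contains a false claim: the $E_2^k$-coefficient of $Q_k$ is $24^{-k}\prod_{j<k}(2j+1)$, not $24^{-k}$; for example $Q_2=\frac{1}{192}E_2^2-\frac{1}{288}E_4$. It is easily repaired. Differentiating the transformation law shows that for $f$ of weight $w$ and depth $\le s$, the depth-$(s+1)$ component of $f^{(1)}$ is $\frac{w-s}{2\pi i}f_s$. Hence $a_{k+1}=\big(\frac{k}{2\pi i}+\frac{1}{4\pi i}\big)a_k=\frac{2k+1}{4\pi i}a_k$, which confirms your formula for $a_k$.
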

\begin{proof}
  For~$k=1$, we have the well-known identity
  \begin{gather*}
    Q_1=\mfrac{1}{24}E_2
    \tx{.}
  \end{gather*}
  Furthermore, an elementary calculus computation shows that
  \begin{gather*}
    Q_{k}^{(1)}
    =
    {Q_{k+1}-Q_kQ_1}
    \tx{,}
    \quad
    \tx{i.e.}
    \quad
    Q_{k+1}=Q_k^{(1)}+Q_kQ_1
    \tx{,}
  \end{gather*}
  and the claim follows from induction on~$k$ and Propositions~\ref{prop:quasimodular_multiplication} and~\ref{prop:quasimodular_derivative}.
\end{proof}
\begin{proposition}
  \label{prop:quasimodular_component}
  We have
  \begin{align*}
    [E_2/\eta,\eta]_1
    &=
    \mfrac{1}{2}
    E_2^{(1)}
    +
    \mfrac{1}{12}
    E_4
    \tx{,}
    \\
    [E_2/\eta,\eta]_2
    &=
    \mfrac{3}{8}
    E_2^{(2)}
    +
    \mfrac{1}{32}
    E_4^{(1)}
    +
    \mfrac{1}{96}
    E_6
    \tx{,}
    \\
    [E_2/\eta,\eta]_3
    &=
    \mfrac{5}{16}
    E_2^{(3)}
    +
    \mfrac{5}{192}
    E_4^{(2)}
    +
    \mfrac{5}{1728}
    E_6^{(1)}
    +
    \mfrac{5}{3456}
    E_8
    \tx{,}
    \\
    [E_2/\eta,\eta]_4
    &=
    \mfrac{35}{128}
    E_2^{(4)}
    +
    \mfrac{35}{1536}
    E_4^{(3)}
    +
    \mfrac{35}{13824}
    E_6^{(2)}
    +
    \mfrac{35}{110592}
    E_8^{(1)}
    +
    \mfrac{35}{165888}
    E_{10}
    \tx{.}
  \end{align*}
\end{proposition}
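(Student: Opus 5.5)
The plan is to expand the bracket, express it through the quotients $Q_k=\eta^{(k)}/\eta$ of Lemma~\ref{la:quasimodular_quotient}, and then match coefficients in the basis of Proposition~\ref{prop:quasimodular_structure}.

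\emph{Expansion.} With $f=E_2/\eta$ of weight $k=3/2$ and $g=\eta$ of weight $l=1/2$, Definition~\ref{def:rankin-cohen} gives
\begin{gather*}
  [E_2/\eta,\eta]_\nu
  =
  \sum_{\mu=0}^\nu(-1)^\mu\mbinom{\nu+\frac12}{\nu-\mu}\mbinom{\nu-\frac12}{\mu}
  (E_2/\eta)^{(\mu)}\,\eta^{(\nu-\mu)}
  \tx{.}
\end{gather*}
To rewrite $(E_2/\eta)^{(\mu)}$, observe that $(1/\eta)^{(1)}=-Q_1/\eta$. Hence $(1/\eta)^{(j)}=R_j/\eta$, where $R_0=1$ and $R_{j+1}=R_j^{(1)}-R_jQ_1$. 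This mirrors the recursion $Q_{k+1}=Q_k^{(1)}+Q_kQ_1$ in the proof of Lemma~\ref{la:quasimodular_quotient}. Leibniz's rule then gives
\begin{gather*}
  (E_2/\eta)^{(\mu)}\,\eta^{(\nu-\mu)}
  =
  \sum_{j}\mbinom{\mu}{j}E_2^{(\mu-j)}R_jQ_{\nu-\mu}
  \tx{,}
\end{gather*}
so the factors of $\eta$ cancel. The bracket is therefore a polynomial in $E_2$ and its derivatives, since $Q_1=E_2/24$ and every $Q_k$ and $R_j$ is a polynomial in $E_2,E_2^{(1)},E_2^{(2)},\dots$.

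\emph{Structure.} By Lemma~\ref{la:quasimodular_quotient} and Propositions~\ref{prop:quasimodular_multiplication} and~\ref{prop:quasimodular_derivative}, each term has weight $2+2\nu$ and depth at most $\nu+1$. I would first check that the depth is in fact at most $\nu$. One route is through the Rankin--Cohen covariance. The completed form $\widehat E_2/\eta$, with $\widehat E_2=E_2-3/(\pi y)$, is modular of weight $3/2$, so $[\widehat E_2/\eta,\eta]_\nu$ is modular of weight $2+2\nu$. The difference between it and $[E_2/\eta,\eta]_\nu$ consists of terms of the form (power of $1/y$) times $Q$'s, and these control the depth. Alternatively, the depth can be checked directly on the explicit polynomial once it is computed. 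Granting depth at most $\nu=(2+2\nu)/2-1$, Proposition~\ref{prop:quasimodular_structure} puts the bracket in $\CC E_2^{(\nu)}\oplus\bigoplus_\mu \CC E_{2+2\nu-2\mu}^{(\mu)}$, because $\rmM_w$ is spanned by $E_w$ for $w\in\{4,6,8,10\}$ and $\rmM_2=0$. So only $\nu+1$ unknown coefficients remain for each $\nu$.

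\emph{Coefficients.} To find these coefficients, I would compare the first $\nu+1$ Fourier coefficients (or a few more, as a check). The left side is computed from the product $E_2/\eta\cdot$ derivatives of the $q$-expansions, using $E_2=1-24\sum\sigma_1(n)q^n$ and the expansion~\eqref{eq:dedekind_eta}. The right side comes from $c(E_w^{(\mu)},n)=n^\mu c(E_w,n)$. This reduces to a small triangular linear system. Note that the constant term isolates $E_{2+2\nu}$, since every derivative has vanishing constant term. The constant term of the bracket is the $\mu=\nu$ term of the sum, evaluated as a product of $(-1/24)^{\mu}$ and $(1/24)^{\nu-\mu}$ factors, and it must match the stated coefficient, for example $1/12$ for $\nu=1$. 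Successive coefficients then determine the derivative terms in turn.

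\emph{Main obstacle.} The main obstacle is the depth bound. If the naive bound $\nu+1$ were sharp, $\nu+1$ powers of $E_2$ could appear, and these lie outside the decomposition of Proposition~\ref{prop:quasimodular_structure}. I would first try the completion argument: the $y^{-\nu-1}$ coefficient of $[\widehat E_2/\eta,\eta]_\nu$ should be proportional to $\sum_\mu(-1)^\mu\binom{\nu+1/2}{\nu-\mu}\binom{\nu-1/2}{\mu}\cdot(\text{const})$, and I would show that this sum vanishes by a Vandermonde-type identity. If that fails, the fallback is to verify the claimed identity directly, coefficient by coefficient, up to the Sturm-type bound for quasimodular forms of weight $2+2\nu\le10$. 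Enough coefficients suffice there, because $\bigoplus_{j}\widetilde{\rmM}$ of that weight embeds into polynomials in $E_2,E_4,E_6$ of bounded degree.
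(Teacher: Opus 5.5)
Your expansion via $Q_k$ and $R_j$, the quasimodularity from Lemma~\ref{la:quasimodular_quotient} and Propositions~\ref{prop:quasimodular_multiplication}--\ref{prop:quasimodular_derivative}, and the final comparison of Fourier coefficients follow the paper's route exactly. The genuine problem is the step you call the main obstacle. $[E_2/\eta,\eta]_\nu$ does \emph{not} have depth at most $\nu$, so that step cannot be proved; its depth is exactly $\nu+1$. The statement itself shows this: $E_2^{(\nu)}$, which has depth $\nu+1$, occurs with nonzero coefficient. It even lies in the target space you write down after ``granting'' depth $\le\nu$.

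Your completion argument confirms this rather than refuting it. The $\mu$th term $(\widehat{E}_2/\eta)^{(\mu)}\eta^{(\nu-\mu)}$ has degree at most $\mu+1$ in $1/y$, so only $\mu=\nu$ reaches $y^{-\nu-1}$. There is no sum for a Vandermonde identity to cancel, and the coefficient is $(-1)^{\nu+1}\binom{\nu-1/2}{\nu}\,3\,\nu!/(\pi(4\pi)^{\nu})\neq0$; for $\nu=1$ it is $3/(8\pi^2)$.

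The confusion comes from misreading Proposition~\ref{prop:quasimodular_structure}. It describes $\widetilde{\rmM}_k^{\le k/2}$, i.e.\ \emph{all} quasimodular forms of weight $k$. For $k=2+2\nu$ that means depth $\le\nu+1$, which is exactly the naive bound you already have. The summand $\CC E_2^{(k/2-1)}$ carries the top depth, so $E_2^{\nu+1}$ is not outside the decomposition (e.g.\ $E_2^2=12E_2^{(1)}+E_4$). The fix is to drop the depth detour entirely. (The paper's proof writes $\widetilde{\rmM}_{2\nu}^{\le\nu}$; read this as $\widetilde{\rmM}_{2+2\nu}^{\le\nu+1}$.)

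There are two smaller slips in your coefficient step.
\begin{itemize}
\item The constant term of the bracket is not the $\mu=\nu$ term alone. Every $\mu$ contributes $(-1)^\mu\binom{\nu+1/2}{\nu-\mu}\binom{\nu-1/2}{\mu}(-1/24)^{\mu}(1/24)^{\nu-\mu}$, and by Vandermonde the sum is $24^{-\nu}\binom{2\nu}{\nu}$. This gives $1/12$, $1/96$, $5/3456$, $35/165888$; the $\mu=\nu$ term alone gives $1/48$ for $\nu=1$.
\item The remaining linear system is not triangular, since $c(E_w^{(\mu)},1)=c(E_w,1)\neq0$ for every $\mu$. You must check that the matrix $\big(n^\mu c(E_{2+2\nu-2\mu},n)\big)_{1\le n,\mu\le\nu}$ is invertible. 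Alternatively, compare more coefficients, or verify the identity as polynomials in the algebraically independent $E_2,E_4,E_6$ using Ramanujan's derivative formulas.
\end{itemize}
With these corrections your argument coincides with the paper's.
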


\begin{proof}
  An explicit calculation shows that
  \begin{align*}
    [E_2/\eta,\eta]_1
    &=
    2E_2Q_1-\mfrac{1}{2}E_2^{(1)}
    \tx{,}
    \\
    [E_2/\eta,\eta]_2
    &=
    \mfrac{3}{2}E_2Q_2
    +
    \mfrac{9}{2}E_2Q_1^2
    -
    \mfrac{9}{2}E_2^{(1)}Q_1
    +
    \mfrac{3}{8}E_2^{(2)}
    \tx{,}
    \\
    [E_2/\eta,\eta]_3
    &=
    \mfrac{5}{2}E_2Q_3
    +
    \mfrac{5}{2}E_2Q_1Q_2
    -
    10E_2^{(1)}Q_2
    +
    15E_2Q_1^3
    -
    15E_2^{(1)}Q_1^2
    +
    \mfrac{15}{2}
    E_2^{(2)}Q_1
    -
    \mfrac{5}{16}E_2^{(3)}
    \tx{,}
    \\
    [E_2/\eta,\eta]_4
    &=
    \mfrac{35}{16}E_2Q_4
    +
    35E_2Q_1Q_3
    -
    \mfrac{385}{16}E_2^{(1)}Q_3
    -
    \mfrac{525}{16}E_2Q_2^2
    -
    \mfrac{525}{16}E_2^{(1)}Q_1Q_2
    +
    \mfrac{525}{16}E_2^{(2)}Q_2
    \\
    &\quad
    +
    \mfrac{525}{8}E_2Q_1^4
    -
    \mfrac{525}{8}E_2^{(1)}Q_1^3
    +
    \mfrac{525}{16}E_2^{(2)}Q_1^2
    -
    \mfrac{175}{16}E_2^{(3)}Q_1
    +
    \mfrac{35}{128}E_2^{(4)}
    \tx{,}
  \end{align*}
  and using Lemma~\ref{la:quasimodular_quotient} and Propositions~\ref{prop:quasimodular_multiplication} and~\ref{prop:quasimodular_derivative}, we see that~$[E_2/\eta,\eta]_\nu\in\td{M}_{2\nu}^{\le\nu}$ for~$\nu=1,2,3,4$.
  The claim follows from computing the first few coefficients of each expression and applying Proposition~\ref{prop:quasimodular_structure} and computer algebra.
\end{proof}

\subsection{The nonholomorphic component}
\label{ssec:fourier:nonhol}
We recall a fact from~\cite{ortiz-raum-richter-2026}.
\begin{lemma}[\cite{ortiz-raum-richter-2026}, Lemma 3.8]
  \label{la:fourier:nonhol}
  For an integer~$\nu \ge 0$ and rational numbers~$n<0$ and~$\td{n}>0$ with~$\td{n}>|n|$,
  \begin{gather*}
    \pi^\hol_{2+2\nu}\big( \big[
      \Ga\big(-\tfrac{1}{2}, 4 \pi |n| y\big)
      e(n \tau),\,
      e(\td{n} \tau)
      \big]_\nu
    \big)
    =
    2\sqrt{\pi}
    \mbinom{\nu-\frac{1}{2}}{\nu}\,
    \frac{1}{\sqrt{|n|}}
    \big(
    \sqrt{\td{n}}
    -
    \sqrt{|n|}
    \big)
    ^{2\nu+1}
    \,
    e\big( (n+\td{n})\tau \big)
    \tx{.}
  \end{gather*}
\end{lemma}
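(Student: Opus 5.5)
The plan is to compute the holomorphic projection directly from Definition~\ref{def:holomorphic_projection}, with $k=3/2$ for the first argument and $l=1/2$ for the second, so the target weight is $2+2\nu$. Put $g_n(\tau):=\Ga(-\tfrac12,4\pi|n|y)\,e(n\tau)$ and $m:=n+\td{n}>0$. The whole bracket is supported on the single frequency $m$, so it suffices to find the coefficient function $c(y)$ of $e(m\tau)$ in $[g_n,e(\td{n}\,\cdot)]_\nu$ and to evaluate
\[
\mfrac{(4\pi m)^{2\nu+1}}{\Ga(2\nu+1)}\int_0^\infty c(y)\,e^{-4\pi m y}\,y^{2\nu}\,\intrmd y .
\]
The factors from $e(\td{n}\,\cdot)$ are trivial, since $e(\td{n}\tau)^{(\nu-\mu)}=\td{n}^{\nu-\mu}e(\td{n}\tau)$. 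All the work lies in the derivatives $g_n^{(\mu)}$.

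\textbf{Step 1: derivatives of $g_n$.} I will compute $g_n^{(\mu)}$ by writing $g_n$ through the integral representation~\eqref{eq:def:non-holomorphic-eichler-integral} rather than differentiating the incomplete Gamma function repeatedly. Up to an explicit constant, $g_n(\tau)=\int_{-\ov{\tau}}^{i\infty}e(-n\ov{w}\,)(w+\tau)^{-3/2}\,\rmd w$ (with the appropriate conjugations). Then $\partial_\tau$ acts only on the kernel $(w+\tau)^{-3/2}$, because the lower limit depends on $\ov{\tau}$ alone. Equivalently, and as a cross-check, I will use $\partial_\tau=\tfrac12(\partial_x-i\partial_y)$ together with $\partial_y\Ga(s,ay)=-a^s y^{s-1}e^{-ay}$. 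This shows that $g_n^{(\mu)}$ equals $n^\mu g_n$ plus $e(n\tau)e^{-4\pi|n|y}$ times an explicit polynomial in $y^{-1/2}$ (odd powers $y^{-3/2},\dots,y^{-\mu-1/2}$). Inserting this into~\eqref{eq:def:rankin-cohen} gives $c(y)$ as $A\,\Ga(-\tfrac12,4\pi|n|y)e^{2\pi\cdot 2|n|y}$, suitably normalized, plus finitely many terms of the form $y^{-j-1/2}e^{-4\pi|n|y}\cdot(\text{exponential})$. The constant is $A=\sum_\mu(-1)^\mu\binom{\nu+1/2}{\nu-\mu}\binom{\nu-1/2}{\mu}n^\mu\td{n}^{\nu-\mu}$.

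\textbf{Step 2: the integrals.} After multiplying by $e^{-4\pi m y}y^{2\nu}$, each piece is an elementary Mellin-type integral. The pure-power terms give Gamma values of the shape $\Ga(2\nu-j+\tfrac12)\,(4\pi\td{n})^{-(2\nu-j+1/2)}$. For the incomplete-Gamma term I write $\Ga(-\tfrac12,ay)=a^{-1/2}\int_y^\infty u^{-3/2}e^{-au}\,\rmd u$ and swap the integrals. This yields a finite expression in $\sqrt{\td{n}}$ and $\sqrt{|n|}$, essentially a terminating ${}_2F_1$ at argument $|n|/\td{n}$, or its $\sqrt{\cdot}$ version. The hypothesis $\td{n}>|n|$ ensures convergence at $y\to\infty$. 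Condition~\ref{it:def:holomorphic_projection:growth} at $y\to 0$ is handled by the regularization $\lim_{s\to0}$ in~\eqref{eq:def:holomorphic_projection}, since each term is separately analytically continued in $s$.

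\textbf{Step 3: collapsing the sum (main obstacle).} The result of Step 2 is a double sum over $\mu$ and over the powers $j$, and it must collapse to $2\sqrt{\pi}\binom{\nu-1/2}{\nu}|n|^{-1/2}(\sqrt{\td{n}}-\sqrt{|n|})^{2\nu+1}$. I expect this combinatorial simplification to be the hardest step. I would first settle $\nu=0$ directly, where the result $2\sqrt{\pi}\,|n|^{-1/2}(\sqrt{\td{n}}-\sqrt{|n|})$ comes from a single integral. For general $\nu$ I would avoid brute force: in the integral representation of Step~1, the bracket $[(w+\tau)^{-3/2},e(\td{n}\tau)]_\nu$ is itself a Rankin--Cohen bracket of simple functions. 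I would therefore interchange the $w$-integral with the projection integral, reducing everything to a one-variable Beta integral. I expect a Chu--Vandermonde identity for the half-integral binomials $\binom{\nu+1/2}{\nu-\mu}\binom{\nu-1/2}{\mu}$ to produce the binomial expansion of $(\sqrt{\td{n}}-\sqrt{|n|})^{2\nu+1}$. As a safeguard, I would check the identity for $\nu=1,2$ by hand and verify that both sides are homogeneous of degree $\nu$ in $(n,\td{n})$.
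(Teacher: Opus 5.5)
\paragraph{Verdict.} There is a genuine gap: your Step~3 is the whole content of the lemma, and it is only announced, not proved. The paper gives no proof of its own; it imports the lemma from Ortiz--Raum--Richter, so your proposal has to supply the entire computation.

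\paragraph{What is fine.} Steps~1--2 are correct in outline. They reduce $\pi^\hol_{2+2\nu}$ to an explicit finite combination of $\Gamma$-values and integrals $\int_0^\infty \Gamma(-\tfrac12,4\pi|n|y)\,y^{p}e^{-4\pi m y}\,\rmd y$. Three small corrections:
\begin{enumerate}
\item The $\Gamma$-term has coefficient just $A\,\Gamma(-\tfrac12,4\pi|n|y)$, with no extra exponential factor.
\item No $\lim_{s\to 0}$ regularization is needed, because every integrand is $O(y^{\nu-1/2})$ at $0$.
\item The hypothesis $\tilde n>|n|$ serves to make $m>0$. It is not what ensures convergence, since every term decays like $e^{-4\pi\tilde n y}$.
\end{enumerate}

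\paragraph{Why Step~3 fails as proposed.} The mechanism you suggest would not collapse the sum to $2\sqrt{\pi}\binom{\nu-1/2}{\nu}|n|^{-1/2}(\sqrt{\tilde n}-\sqrt{|n|})^{2\nu+1}$.
\begin{itemize}
\item Interchanging the $w$-integral with Sturm's integral gives, for the $\mu$-th term and up to constants, $\Gamma(2\nu-\mu+\tfrac12)\int_0^1 t^{2\nu}(|n|+mt)^{-(2\nu-\mu+1/2)}\,\rmd t$. This is an incomplete Beta integral (a ${}_2F_1$), not a Beta integral, so the $\mu$-sum is still unevaluated.
\item Chu--Vandermonde applied to $\binom{\nu+1/2}{\nu-\mu}\binom{\nu-1/2}{\mu}$ yields only the number $\binom{2\nu}{\nu}$. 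It cannot absorb the weights $n^\mu\tilde n^{\nu-\mu}$ or the $\mu$-dependent lower-order terms, so it cannot produce the expansion of $(\sqrt{\tilde n}-\sqrt{|n|})^{2\nu+1}$.
\item Checking $\nu=1,2$ together with homogeneity does not prove the identity for all $\nu$.
\end{itemize}

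\paragraph{One way to close the gap.} Vandermonde does enter, but only after the $\mu$-dependence has been removed by integration by parts.

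First, replace $D=(2\pi i)^{-1}\partial_\tau$ by $-\frac{1}{4\pi}R_\kappa$, where $R_\kappa=2i\partial_\tau+\kappa/y$. The bracket is unchanged, because all $1/y$-terms cancel. Sturm's functional in weight $2+2\nu$ kills $R_{2\nu}\big(c(y)e(m\tau)\big)$: its integrand is $\frac{\rmd}{\rmd y}\big(c(y)e^{-4\pi my}y^{2\nu}\big)$, and the boundary terms vanish because the coefficient functions are $O(y^{1/2-\nu})$ at $0$. With the Leibniz rule for $R$, this gives
\[
  \pi^\hol_{2+2\nu}\big([g_n,e(\tilde n\,\cdot)]_\nu\big)
  =
  (-4\pi)^{-\nu}\tbinom{2\nu}{\nu}\,
  \pi^\hol_{2+2\nu}\big(g_n\,R^\nu e(\tilde n\,\cdot)\big),
  \qquad
  \tbinom{2\nu}{\nu}=4^\nu\tbinom{\nu-1/2}{\nu},
\]
which accounts for the prefactor.

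Next, set $a=4\pi|n|$, $u=4\pi\tilde n$ and $b=u-a$. Use three facts:
\begin{itemize}
\item $\int_0^\infty\Gamma(-\tfrac12,ay)e^{-by}\,\rmd y=\frac{2\sqrt\pi}{\sqrt a(\sqrt a+\sqrt u)}$, differentiated repeatedly in $b$.
\item $R^\nu_{1/2}e(\tilde n\tau)=\sum_j\binom{\nu}{j}\frac{\Gamma(\nu+1/2)}{\Gamma(j+1/2)}(-u)^j y^{j-\nu}e(\tilde n\tau)$.
\item $\sum_j\binom{\nu}{j}\frac{\Gamma(\nu+1/2)}{\Gamma(j+1/2)}u^j\partial_u^j=u^{1/2}\partial_u^\nu u^{\nu-1/2}$, by the Leibniz rule.
\end{itemize}
With these, the remaining Sturm integral becomes $(-1)^\nu\frac{2\sqrt\pi}{\sqrt a}\,u^{1/2}\partial_u^\nu u^{\nu-1/2}\partial_u^\nu(\sqrt a+\sqrt u)^{-1}$.

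The operator $u^{1/2}\partial_u^\nu u^{\nu-1/2}\partial_u^\nu$ maps $u^{k/2}$ to $4^{-\nu}k(k-1)\cdots(k-2\nu+1)\,u^{k/2-\nu}$. Hence it sends $e^{-s\sqrt u}$ to $(s/2)^{2\nu}e^{-s\sqrt u}$. Writing $(\sqrt a+\sqrt u)^{-1}=\int_0^\infty e^{-s(\sqrt a+\sqrt u)}\,\rmd s$, you obtain $4^{-\nu}(2\nu)!\,(\sqrt a+\sqrt u)^{-2\nu-1}$.

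Finally, multiply by the Sturm factor $b^{2\nu+1}/(2\nu)!$ and use $b=(\sqrt u-\sqrt a)(\sqrt u+\sqrt a)$. This gives $(\sqrt u-\sqrt a)^{2\nu+1}$, which is exactly the claimed right-hand side after substituting $a$ and $u$.
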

Applying this specifically to the Fourier expansion of~$\eta$ yields the following:
\begin{proposition}
  \label{prop:nonhol_component}
  We have
  \begin{gather*}
    c\big(
    \pi_{2+2\nu}^\hol
    \big(
    [\eta^*,\eta]_\nu
    \big),
    N\big)
    =
    -\mfrac{4\pi}{\sqrt{6}}
    \big(\mfrac{1}{6}\big)^\nu
    \mbinom{\nu-\frac{1}{2}}{\nu}
    \;
    \lambda_{2\nu+1}^*(N)
    \tx{,}
  \end{gather*}
  where~$\lambda^*_{2\nu+1}(N)$ is as in~\eqref{eq:lambda_star}.
\end{proposition}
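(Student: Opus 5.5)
The plan is to expand both $\eta^*$ and $\eta$ as Fourier series and use bilinearity of the Rankin--Cohen bracket. Lemma~\ref{la:fourier:nonhol} then evaluates the projection term by term, and what remains is to rewrite the resulting sum over pairs of exponents as the divisor sum~\eqref{eq:lambda_star}.

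\textbf{Step 1: expansion of $\eta^*$.} Apply~\eqref{eq:def:non-holomorphic-eichler-integral} with $f=\eta$ and $k=3/2$, so that $2-k=1/2$. Since $\eta$ has no constant term, $\eta^*$ has no $y^{1-k}$ term. The coefficients of $\eta$ are $c(\eta,(6m+1)^2/24)=(-1)^m$, and these are real. Hence
\begin{gather*}
  \eta^*(\tau)
  =
  -2\sqrt{\pi}
  \sum_{m\in\ZZ}
  (-1)^m\,
  \mfrac{|6m+1|}{\sqrt{24}}\,
  \Gamma\big(-\tfrac12, 4\pi |n_m| y\big)\, e(n_m\tau)
  \tx{,}
  \qquad
  n_m=-\mfrac{(6m+1)^2}{24}
  \tx{.}
\end{gather*}

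\textbf{Step 2: bracket with $\eta$ and projection.} Pair this with $\eta=\sum_{m'}(-1)^{m'}e(\td n_{m'}\tau)$, where $\td n_{m'}=(6m'+1)^2/24$. Coefficients of $\pi^\hol_{2+2\nu}$ are computed exponent by exponent, so the $N$th coefficient with $N>0$ only receives contributions from pairs with $n_m+\td n_{m'}=N>0$. Such pairs satisfy $\td n_{m'}>|n_m|$, which is exactly the hypothesis of Lemma~\ref{la:fourier:nonhol}. Pairs with nonpositive total exponent are irrelevant for $c(\cdot,N)$ with $N>0$.

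I would justify interchanging the holomorphic projection with the infinite sum by noting two facts. First, for fixed $N$ only finitely many pairs contribute. Second, the coefficient integrals in~\eqref{eq:def:holomorphic_projection} converge absolutely, since $\Gamma(-\tfrac12,x)$ decays exponentially.

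In each contributing term the factor $|n|^{1/2}$ from $\eta^*$ cancels the factor $|n|^{-1/2}$ from Lemma~\ref{la:fourier:nonhol}. This gives
\begin{gather*}
  c\big(\pi^\hol_{2+2\nu}([\eta^*,\eta]_\nu),N\big)
  =
  -4\pi\mbinom{\nu-\frac12}{\nu}
  \sum_{\substack{m,m'\\ \td n_{m'}+n_m=N}}
  (-1)^{m+m'}
  \Big(\mfrac{u-v}{\sqrt{24}}\Big)^{2\nu+1}
  \tx{,}
\end{gather*}
where $u=|6m'+1|$ and $v=|6m+1|$.

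\textbf{Step 3: reparametrization.} The condition $\td n_{m'}+n_m=N$ reads $u^2-v^2=24N$. The map $m\mapsto|6m+1|$ is a bijection from $\ZZ$ onto the positive integers coprime to $6$. Therefore the sum runs over pairs $u>v>0$ of integers coprime to $6$ with $u^2-v^2=24N$.

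Put $a=(u-v)/2$ and $b=(u+v)/2$. Then $ab=6N$, $0<a<b$, and
\begin{gather*}
  \Big(\mfrac{u-v}{\sqrt{24}}\Big)^{2\nu+1}
  =
  \mfrac{(2a)^{2\nu+1}}{24^{\nu}\sqrt{24}}
  =
  \mfrac{1}{\sqrt6}\big(\mfrac16\big)^{\nu}a^{2\nu+1}
  \tx{,}
\end{gather*}
which produces the stated prefactor $-\tfrac{4\pi}{\sqrt6}\big(\tfrac16\big)^\nu\binom{\nu-1/2}{\nu}$.

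Conversely, given $ab=6N$ with $a<b$, set $u=b+a$ and $v=b-a$. Then $u,v$ are coprime to $6$ exactly when $a,b$ have opposite parity and exactly one of $a,b$ is divisible by $3$. Since $ab=6N$ is even, the parity condition says that only one of $a,b$ is even, so $6N\equiv 2\bmod 4$ would impose an extra constraint. I would check whether that constraint is already encoded in~\eqref{eq:lambda_star}, where the sign $(-1)^{a/3}$ or $(-1)^{b/3}$ attached to a factorization can make the extra terms cancel. This yields the two sums in~\eqref{eq:lambda_star}.

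\textbf{Step 4: the sign (main obstacle).} It remains to show that $(-1)^{m+m'}$ equals $(-1)^{a/3}$ when $3\mid a$, and $(-1)^{b/3}$ when $3\mid b$. I expect this sign bookkeeping, together with the parity subtlety in Step 3, to be the main obstacle.

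For the sign, write $6m+1=\epsilon v$ and $6m'+1=\epsilon' u$ with $\epsilon,\epsilon'\in\{\pm1\}$ determined by $u,v\bmod 6$. Then $m=(\epsilon v-1)/6$, so $(-1)^m$ is determined by $v \bmod 12$, and similarly for $m'$. I would first tabulate $(-1)^{m}$ as a function of $v\bmod 12$: it is a character-like function $\psi(v)$. Then I would verify case by case on $(a\bmod 6,\,b\bmod 6)$ that $\psi(b+a)\psi(b-a)=(-1)^{a/3}$ or $(-1)^{b/3}$ as appropriate. This is a finite check of residues modulo $12$. Any factorizations that violate the parity requirement should show up in this table as contributions that cancel or are excluded.
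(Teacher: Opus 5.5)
Your route is the same as the paper's. You expand $\eta^*$ via \eqref{eq:def:non-holomorphic-eichler-integral}, apply Lemma~\ref{la:fourier:nonhol} termwise, and substitute $a=(u-v)/2$, $b=(u+v)/2$. Steps 1--2 and the prefactor are correct.

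The gap is the parity issue you flagged in Step 3, and it does not resolve by cancellation. A factorization $ab=6N$ with $a,b$ both even (and exactly one divisible by $3$) is counted in \eqref{eq:lambda_star}. But it gives even $u=b+a$ and $v=b-a$, and hence corresponds to no pair $(m,m')$. Take $N=2$. The only solutions of $u^2-v^2=48$ with $u,v$ prime to $6$ are $(13,11)$ and $(7,1)$, i.e.\ $(a,b)=(1,12)$ and $(3,4)$, with signs $+1$ and $-1$. However, \eqref{eq:lambda_star} also contains $(a,b)=(2,6)$, coming from $(u,v)=(8,4)$, with sign $(-1)^{6/3}=+1$. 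So $\lambda^*_{2\nu+1}(2)$ exceeds the true sum by $2^{2\nu+1}$; for $\nu=1$ it gives $-18$ instead of $-26$. No sign bookkeeping in Step 4 can therefore prove the statement as written. It is correct for odd $N$, where $6N\equiv 2\pmod 4$ forces $a$ and $b$ to have opposite parity. The paper's proof has the same omission: after halving, $a+b=|6\td{m}+1|$ is odd, but this condition is dropped in \eqref{eq:nonhol:i}--\eqref{eq:nonhol:iv}, and the correspondence with the stated index sets is never checked.

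What your argument does establish, once Step 4 is carried out, is a corrected formula. Your $\psi$ is the Kronecker character $\psi(v)=\left(\frac{12}{v}\right)$, with values $1,-1,-1,1$ on $v\equiv 1,5,7,11\pmod{12}$. Hence $(-1)^{m+m'}=\left(\frac{12}{u}\right)\left(\frac{12}{v}\right)=\left(\frac{12}{b^2-a^2}\right)$, and
\[
  c\big(\pi^{\hol}_{2+2\nu}([\eta^*,\eta]_\nu),N\big)
  =
  -\frac{4\pi}{\sqrt6}\Big(\frac16\Big)^{\nu}\binom{\nu-\frac12}{\nu}
  \sum_{ab=6N,\ a<b}\Big(\frac{12}{b^2-a^2}\Big)\,a^{2\nu+1}.
\]
The symbol vanishes automatically on the spurious factorizations (both even, or both divisible by $3$), as in Zagier's recursion quoted in the introduction. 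A check modulo $12$ shows that on the remaining factorizations it equals $(-1)^{a/3}$ if $3\mid a$ and $(-1)^{b/3}$ if $3\mid b$. The proposition therefore holds once the condition $a\not\equiv b\pmod 2$ is added to both sums in \eqref{eq:lambda_star}.
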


\begin{proof}
  From~\eqref{eq:def:non-holomorphic-eichler-integral} and Lemma~\ref{la:fourier:nonhol},
  \begin{align*}
    \pi_{2+2\nu}^\hol
    \big(
    [\eta^*,\eta]_\nu
    \big)
    &=
    -\sqrt{4\pi}
    \sum_{m,\td{m}\in \ZZ}
    (-1)^{m+\td{m}}
    \sqrt{M}
    \cdot
    \pi_{2+2\nu}^\hol
    \big(
    \big[
      \Ga\big(
      -\mfrac{1}{2},
      4\pi My\big)
      e(-{M}\tau),
      e(\td{M}\tau)
      \big]_\nu
    \big)
    \\
    &=
    -4\pi
    \mbinom{\nu-\frac{1}{2}}{\nu}
    \sum_{\substack{m,\td{m}\in\ZZ \\ \td{M}\ge M}}
    (-1)^{m+\td{m}}
    \big(
    \sqrt{\td{M}}
    -
    \sqrt{M}
    \big)^{2\nu+1}
    e\big((\td{M}-M)\tau\big)
    \tx{,}
  \end{align*}
  where~$M=\frac{3m^2+m}{2}+\frac{1}{24}=\frac{1}{24}(6m+1)^2$, and likewise for~$\td{M}$.
  Hence
  \begin{align*}
    c\big(
    \pi_{2+2\nu}^\hol
    \big(
    [\eta^*,\eta]_\nu
    \big),
    N\big)
    &=
    -4\pi
    \mbinom{\nu-\frac{1}{2}}{\nu}
    \sum_{\td{M}-M=N}
    (-1)^{m+\td{m}}
    \big(
    \sqrt{\td{M}}
    -
    \sqrt{M}
    \big)^{2\nu+1}
    \\
    &=
    -\mfrac{2\pi}{\sqrt{6}}
    \big(\mfrac{1}{24}\big)^\nu
    \mbinom{\nu-\frac{1}{2}}{\nu}
    \sum_{\substack{ab=24N \\ m,\td{m}\in \ZZ}}
    (-1)^{m+\td{m}}
    a^{2\nu+1}
    \tx{,}
  \end{align*}
  where~$a=|6\td{m}+1|-|6m+1|$ and~$b=|6\td{m}+1|+|6m+1|$.

  We now inspect the congruence modulo 6 of the divisors~$a$ and~$b$.
  First, take~$m,\td{m}\geq 0$.
  Then~$a\equiv 0\bmod{6}$ and~$b\equiv 2\bmod{6}$.
  Furthermore,~$m+\td{m}\equiv \td{m}-m=\big|\td{m}+\frac{1}{6}\big|-\big|m+\frac{1}{6}\big|=\frac{a}{6}\bmod{2}$.
  Knowing that~$a$ and ~$b$ are both even, we take~$a\ra 2a$ and~$b\ra 2b$ to obtain
  \begin{gather}
    \label{eq:nonhol:i}
    \tag{\tx{I}}
    -\mfrac{2\pi}{\sqrt{6}}
    \big(\mfrac{1}{24}\big)^\nu
    \mbinom{\nu-\frac{1}{2}}{\nu}
    \sum_{\substack{(2a)(2b)=24N \\ a\equiv 0\bmod{3}\\ b\equiv 1\bmod{3} \\ a<b}}
    (-1)^{a/3}
    (2a)^{2\nu+1}
    =
    -\mfrac{4\pi}{\sqrt{6}}
    \big(\mfrac{1}{6}\big)^\nu
    \mbinom{\nu-\frac{1}{2}}{\nu}
    \sum_{\substack{ab=6N \\ a\equiv 0\bmod{3}\\ b\equiv 1\bmod{3} \\ a<b}}
    (-1)^{a/3}
    a^{2\nu+1}
    \tx{.}
  \end{gather}
  Next consider~$m\ge 0$ and~$\td{m}<0$, in which case~$a\equiv4\bmod{6}$ and~$b\equiv 0\bmod{6}$, and~$m+\td{m}\equiv m-\td{m}=\big|m+\frac{1}{6}\big|+\big|\td{m}+\frac{1}{6}\big|=\frac{b}{6}\bmod{2}$.
  Repeating the same procedure as for~\eqref{eq:nonhol:i} yields the contribution
  \begin{gather}
    \label{eq:nonhol:ii}
    \tag{\tx{II}}
    -\mfrac{4\pi}{\sqrt{6}}
    \big(\mfrac{1}{6}\big)^\nu
    \mbinom{\nu-\frac{1}{2}}{\nu}
    \sum_{\substack{ab=6N \\ a\equiv 2\bmod{3}\\ b\equiv 0\bmod{3} \\ a<b}}
    (-1)^{b/3}
    a^{2\nu+1}
    \tx{.}
  \end{gather}
  In the case~$m<0$ and~$m\ge 0$, we have~$a\equiv 2\bmod{6}$ and~$b\equiv 0\bmod{6}$.
  Likewise,~$m+\td{m}\equiv \td{m}-m=\big|\td{m}+\frac{1}{6}\big|+\big|m+\frac{1}{6}\big|=\frac{b}{6}\bmod{2}$.
  Hence this case contributes
  \begin{gather}
    \label{eq:nonhol:iii}
    \tag{\tx{III}}
    -\mfrac{4\pi}{\sqrt{6}}
    \big(\mfrac{1}{6}\big)^\nu
    \mbinom{\nu-\frac{1}{2}}{\nu}
    \sum_{\substack{ab=6N \\ a\equiv 1\bmod{3}\\ b\equiv 0\bmod{3} \\ a<b}}
    (-1)^{b/3}
    a^{2\nu+1}
    \tx{.}
  \end{gather}
  Finally, for~$m,\td{m}<0$, we have~$a\equiv 0\bmod{6}$ and~$b\equiv4\bmod{6}$, so~$m+\td{m}\equiv m-\td{m}=\big|\td{m}+\frac{1}{6}\big|-\big|m+\frac{1}{6}\big|=\frac{a}{6}\bmod{2}$.
  Hence this case yields
  \begin{gather}
    \label{eq:nonhol:iv}
    \tag{\tx{IV}}
    -\mfrac{4\pi}{\sqrt{6}}
    \big(\mfrac{1}{6}\big)^\nu
    \mbinom{\nu-\frac{1}{2}}{\nu}
    \sum_{\substack{ab=6N \\ a\equiv 0\bmod{3}\\ b\equiv 2\bmod{3} \\ a<b}}
    (-1)^{a/3}
    a^{2\nu+1}
    \tx{.}
  \end{gather}
  Combining~\eqref{eq:nonhol:i},~\eqref{eq:nonhol:ii},~\eqref{eq:nonhol:iii}, and~\eqref{eq:nonhol:iv} yields the claimed formula.
\end{proof}

\section{Proof of main theorem}
\label{sec:proof_main}
\begin{proof}[Proof of Theorem~\ref{thm:main_theorem}]
  The recursions follow from inspecting the~$N$th coefficient of~\eqref{eq:preliminaries:identity} using Propositions~\ref{prop:spt_component},~\ref{prop:quasimodular_component}, and~\ref{prop:nonhol_component}.
  For~$\nu=1$, for example,
  \begin{gather*}
    f_1(N,m)
    =
    3m^2+m-\mfrac{1}{2}N+\mfrac{1}{12}
    \tx{.}
  \end{gather*}
  In addition,
  \begin{gather*}
    c([E_2/\eta,\eta]_1,N)
    =
    c\big(\mfrac{1}{2}E_2^{(1)}+\mfrac{1}{12}E_4,N\big)
    =
    -12N\sigma_1(N)+20\sigma_3(N)
    \tx{.}
  \end{gather*}
  Finally,
  \begin{gather*}
    c\big(
    \pi_{2+2\nu}^\hol
    \big(
    [\eta^*,\eta]_\nu
    \big),
    N\big)
    =
    -\mfrac{\pi}{3\sqrt{6}}
    \lambda_3^*(N)
    \tx{.}
  \end{gather*}
  Rearranging and clearing the denominator of 12 yields the first recursion.
  The rest follow analogously.
\end{proof}
  
\vspace{1.5\baselineskip}
\phantomsection
\addcontentsline{toc}{section}{References}
\markright{References}
\label{sec:references}
{
  \sloppy
  \linespread{0.8}
  \printbibliography[heading=none]
}

\filbreak
\Needspace*{5\baselineskip}
\noindent%
\rule{\textwidth}{0.15em}
\\\nopagebreak

{\small\noindent
  Matthew Ortiz\\\nopagebreak
  Department of Mathematics\\\nopagebreak
  University of North Texas\\\nopagebreak
  Denton, TX 76203, USA\\\nopagebreak
  E-mail: \url{matthewortiz2@my.unt.edu}
}
\end{document}

%% file: packages.tex
\usepackage{etoolbox}
\usepackage{ifdraft}
\usepackage{iftex}

\ifluatex
\usepackage[utf8]{luainputenc}
\else
\usepackage[utf8]{inputenc}
\usepackage[T1]{fontenc}
\fi

\usepackage[english]{babel}
\usepackage[autostyle=true]{csquotes}

\usepackage{lmodern}
\usepackage{fourier}

\usepackage{amsfonts}
\usepackage{mathrsfs} 
\usepackage{dsfont}

\usepackage{amssymb}
\usepackage{stmaryrd}    

\usepackage{amsmath}
\usepackage{mathtools}
\usepackage[thmmarks, amsmath, amsthm]{ntheorem}
\usepackage{nccmath}    
\usepackage{tensor}

\usepackage[
  hscale=0.8,vscale=0.875,
  includehead,footskip=2\baselineskip,
  ]{geometry}

\usepackage[draft=false]{scrlayer-scrpage}
\usepackage{needspace}

\usepackage[cmyk,dvipsnames]{xcolor}

\ifdraft{%
\usepackage[textsize=scriptsize,colorinlistoftodos]{todonotes}
\usepackage{lineno}
\usepackage{scrtime}
}{}

\usepackage{booktabs}
\usepackage[inline]{enumitem}
\usepackage{lettrine}
\usepackage{url}

\usepackage[backend=biber,style=numeric-comp,giveninits,url=false,sortcites=true,maxbibnames=99]{biblatex}
\usepackage[final,
            pdfborder={0 0 0}, colorlinks=true,
            linkcolor=BrickRed, citecolor=ForestGreen, urlcolor=RoyalBlue]{hyperref}

%% file: layout.tex
\ifdraft{%
\linenumbers
\newcommand*\patchAmsMathEnvironmentForLineno[1]{%
  \expandafter\let\csname old#1\expandafter\endcsname\csname #1\endcsname
  \expandafter\let\csname oldend#1\expandafter\endcsname\csname end#1\endcsname
  \renewenvironment{#1}%
     {\linenomath\csname old#1\endcsname}%
     {\csname oldend#1\endcsname\endlinenomath}}%
\newcommand*\patchBothAmsMathEnvironmentsForLineno[1]{%
  \patchAmsMathEnvironmentForLineno{#1}%
  \patchAmsMathEnvironmentForLineno{#1*}}%
\AtBeginDocument{%
\patchBothAmsMathEnvironmentsForLineno{equation}%
\patchBothAmsMathEnvironmentsForLineno{align}%
\patchBothAmsMathEnvironmentsForLineno{flalign}%
\patchBothAmsMathEnvironmentsForLineno{alignat}%
\patchBothAmsMathEnvironmentsForLineno{gather}%
\patchBothAmsMathEnvironmentsForLineno{multline}%
}}

\clearmainofpairofpagestyles
\automark[section]{subsection}

\renewcommand{\subsectionmark}[1]{}

\lohead{{\small\normalfont \headertitle}}
\rohead{{\small \headerauthors}}

\RedeclareSectionCommand[%
  font=\Large\sffamily\bfseries,%
  beforeskip=1\baselineskip,%
  afterskip=0.5\baselineskip,%
  indent=0em,%
  afterindent=false,%
  tocbeforeskip=0.3\baselineskip plus 1pt minus 1pt%
  ]{section}

\RedeclareSectionCommands[%
  font=\normalfont\bfseries,%
  beforeskip=3pt,%
  afterskip=-1em%
  ]{subsection,subsubsection}

\RedeclareSectionCommands[%
  font=\normalfont\itshape,%
  beforeskip=.5\baselineskip,%
  afterskip=-1em,%
  indent=0pt,%
]{paragraph}

\AtEveryBibitem{\clearfield{doi}}
\AtEveryBibitem{\clearfield{isbn}}
\AtEveryBibitem{\clearfield{issn}}
\AtEveryBibitem{\clearfield{pages}}
\AtEveryBibitem{\clearlist{language}}

\renewbibmacro*{in:}{}
\DeclareFieldFormat
  [article,inbook,incollection,inproceedings,patent,thesis,unpublished,misc]
  {title}{#1}

\setitemize{itemsep=0.02\baselineskip}

\newenvironment{enumerateroman}{
\begin{enumerate}[label=(\roman*),%
  leftmargin=2.5em,itemindent=0pt,%
  labelindent=.5em,labelwidth=1.5em,labelsep=!,%
  nosep]
}{
\end{enumerate}
}

\newenvironment{enumeratearabic*}{
\begin{enumerate*}[label=(\arabic*)] 
}{
\end{enumerate*}
}

\newenvironment{enumerateroman*}{
\begin{enumerate*}[label=(\roman*)] 
}{
\end{enumerate*}
}

\numberwithin{equation}{section}

\theoremnumbering{arabic}
\newtheorem{theoremcounter}{theoremcounter}[section]
\theoremnumbering{Alph}
\newtheorem{maintheoremcounter}{maintheoremcounter}

\theoremstyle{plain}

\newtheorem{lemma}[theoremcounter]{Lemma}

\newtheorem{proposition}[theoremcounter]{Proposition}

\theoremstyle{plain}

\newtheorem{maintheorem}[maintheoremcounter]{Theorem}

\theoremstyle{definition}

\newtheorem{definition}[theoremcounter]{Definition}

\theoremstyle{remark}

\newtheorem{remark}[theoremcounter]{Remark}

\theoremstyle{nonumberremark}



%% file: shortcuts.tex
\newcommand{\tx}{\text}
\newcommand{\thdash}{\nbd th}
\newcommand{\nbd}{\nobreakdash-\hspace{0pt}}

\newcommand{\fref}[2]{\hyperref[#2]{#1~\ref*{#2}}}

\makeatletter
\newcommand{\writelabel}[1]{#1\def\@currentlabel{#1}}
\makeatother

\newcommand{\minwidthmathbox}[2]{%
  \mathmakebox[{\ifdim#1<\width\width\else#1\fi}]{#2}%
}

\newcommand{\tbf}{\bfseries}

\newcommand{\bbM}{\mathbb{M}}

\newcommand{\cO}{\mathcal{O}}

\newcommand{\rmd}{\mathrm{d}}

\newcommand{\rmM}{\mathrm{M}}

\newcommand{\rmS}{\mathrm{S}}

\newcommand{\td}{\tilde}
\newcommand{\wtd}{\widetilde}
\newcommand{\ov}{\overline}
\newcommand{\ul}{\underline}

\newcommand{\defcol}{\mathrel{:}}
\newcommand{\defeq}{\mathrel{:=}}

\newcommand{\mrelspace}[1]{\mathrel{\mspace{#1}}}

\NewCommandCopy{\rightarroworig}{\rightarrow}
\renewcommand{\rightarrow}
  {\protect\relbar\mrelspace{-9.7mu}\rightarroworig}

\NewCommandCopy{\leftarroworig}{\leftarrow}
\renewcommand{\leftarrow}
  {\protect\leftarroworig\mrelspace{-9.7mu}\relbar}

\newcommand{\ra}{\rightarrow}

\newcommand{\ZZ}{\mathbb{Z}}
\newcommand{\QQ}{\mathbb{Q}}

\newcommand{\CC}{\mathbb{C}}

\newcommand{\SL}[1]{\mathrm{SL}_{#1}}

\newcommand{\HS}{\mathbb{H}}

%% file: shortcuts_this.tex
\newcommand{\Ga}{\Gamma}
\newcommand{\ga}{\gamma}

\newcommand{\hol}{\mathrm{hol}}
\DeclareMathOperator{\Ind}{Ind}
\newcommand{\spt}{\mathrm{spt}}

\newcommand{\intrmd}{\rmd\mspace{-2mu}}